\begin{document}

\title{
\bf Analysis of a nonlinear necrotic tumor model with angiogenesis and a periodic supply of external nutrients}

\author{Huijuan Song, Qian Huang
and Zejia Wang\thanks{Corresponding author.
Email: zejiawang@jxnu.edu.cn}
\\
{\small \it School of Mathematics and Statistics, Jiangxi Normal University, }
\\
{\small \it Nanchang, Jiangxi 330022, P.R. China}
}

\date{}

\maketitle

\begin{abstract}
In this paper, we consider a free boundary problem modeling the growth of spherically symmetric necrotic tumors with angiogenesis and a $\omega$-periodic supply $\phi(t)$ of external nutrients. In the model, the consumption rate of the nutrient and the proliferation rate of tumor cells $S(\sigma)$ are both general nonlinear functions. The well-posedness and asymptotic behavior of solutions
are studied. We show that if the average of $S(\phi(t))$ is nonpositive, then all evolutionary tumors will finally vanish; the converse is also ture. If instead the average of $S(\phi(t))$ is positive, then there exists a unique positive periodic solution and all other evolutionary tumors will converge to this periodic state.
\\
{\bf Keywords:}
\quad Free boundary problem; necrotic tumor; angiogenesis; periodic solution; stability
\\
{2020MSC:} 35R35, 35B35, 35Q92
\end{abstract}

\let\oldsection\section
\def\SEC{\oldsection}
\renewcommand\section{\setcounter{equation}{0}\SEC}
\renewcommand\thesection{\arabic{section}}

\renewcommand\theequation{\thesection.\arabic{equation}}
\newtheorem{proposition}{Proposition}[section]
\newtheorem{lemma}{Lemma}[section]
\newtheorem{theorem}{Theorem}[section]
\newtheorem{remark}{Remark}[section]
\newtheorem{corollary}{Corollary}[section]
\def\pd#1#2{\dfrac{\partial#1}{\partial#2}}
\allowdisplaybreaks
\renewcommand{\proofname}{\indent\it\bfseries Proof}

\section{Introduction}

It has been recognized that a tumor contains different populations of cells, such as proliferating cells, necrotic cells and ``in-between'' quiescent cells. During 1970's, Greenspan \cite{Gr-72,Gr-76} proposed the first mathematical model in the form of free boundary problem of reaction diffusion equations to explain this phenomenon. Since then a variety of PDE models for tumor growth have been developed, cf. \cite{B-X(13),B-F(05),B-C(95),B-C(96),Cui(06),F-MB(05),F-L(15)}, the review articles \cite{Low,N(05)} and the references cited therein. Besides rich numerical results, many rigorous mathematical analysis results including existence, uniqueness and stability theorems have been obtained, cf. \cite{B-X(13),B-F(05),B-E-Z(08),C-F(01),Cui(05),Cui(06),F-MB(05),F-L(15),H-X(21),H-Z-H(19),
S-H-W(21),SHW(21),W-W(19),W-X(20),X-S(20),Z-X(14),Z-C(18)} and the references given therein.

In this paper we study the following free boundary problem modeling the growth of radially symmetric necrotic tumors with angiogenesis and a periodic supply of external nutrients:
\begin{align}
    &\frac1{r^2}\frac{\partial}{\partial r}\left(r^2\frac{\partial\sigma}{\partial r}\right)= f(\sigma)H(\sigma-\sigma_D), &&0<r<R(t),~t>0,
    \label{eq(1.1)}
    \\
    &\frac{\partial\sigma}{\partial r}(0,t)=0,&&t>0,
    \label{eq(1.2)}
    \\
    &\frac{\partial\sigma}{\partial r}(R(t),t)+\beta[\sigma(R(t),t)-\phi(t)]=0,&&t>0,
    \label{eq(1.3)}
    \\
    &R^2(t)\frac{dR(t)}{dt}=\int^{R(t)}_0\big\{g(\sigma)H(\sigma-\sigma_D)-\nu[1-H(\sigma-\sigma_D)]
    \big\}r^2dr,&&t>0,
    \label{eq(1.4)}
    \\
    &R(0)=R_0.&&
    \label{eq(1.5)}
\end{align}
Here $\sigma(r,t)$ denotes the nutrient concentration within the tumor, $r=|x|$, $x\in\mathbb{R}^3$, $R(t)$ is the tumor radius, $H(s)$ is the Heaviside function: $H(s)=0$ for $s\leq0$ and $H(s)=1$ for $s>0$, the term $f(\sigma)H(\sigma-\sigma_D)$ on the right-hand side of \eqref{eq(1.1)} and
\begin{equation}
\label{eq(1.7)}
S(\sigma)=g(\sigma)H(\sigma-\sigma_D)-\nu[1-H(\sigma-\sigma_D)]
\end{equation}
represent the nutrient consumption rate and the tumor-cell proliferation rate functions, respectively, $\beta$, $\sigma_D$ and $\nu$ are positive constants, with $\beta$ depending on the density of the blood vessels, $\sigma_D$ a threshold value of nutrient concentration
to sustain tumor cells alive and proliferating, i.e., only in the region where $\sigma>\sigma_D$ tumor cells are alive and proliferating, and $\nu$ the dissolution rate of necrotic cells, $\phi(t)$ is a positive and continuous periodic function of period $\omega>0$, which reflects that the nutrient concentration in the host tissue is periodically changed, and $R_0>0$ is a given initial tumor radius.

Before going to our interest, we prefer to recall some relevant works. The model \eqref{eq(1.1)}--\eqref{eq(1.5)} with linear consumption and proliferation rates:
\begin{equation}
\label{eq(1.6)}
f(\sigma)=\sigma,\quad g(\sigma)=\mu(\sigma-\tilde\sigma)\quad(\mu>0,~\tilde\sigma>0)
\end{equation}
and the Dirichlet boundary condition
\begin{equation}
\label{eq(1.8)}
\sigma(R(t),t)=\bar\sigma\quad(\bar\sigma>0),
\end{equation}
was proposed by Cui \cite{Cui(06)} as in essence a combination of two
Byrne-Chaplain inhibitor-free and avascular tumor models; see \cite{B-C(95)} for the nonnecrotic case and then \cite{B-C(96)} for
its necrotic version. This is made such that both nonnecrotic tumors and necrotic tumors can be considered
in a joint way. By delicate calculations based on the existence of an explicit form for solutions of \eqref{eq(1.1)},
\eqref{eq(1.2)}, \eqref{eq(1.8)} for given $R(t)$, Cui \cite{Cui(06)}
studied the existence, uniqueness and global asymptotic stability of stationary solutions, the dependence on the parameters $\nu$ and $\bar\sigma$, as well as the mutual transition between the nonnecrotic and necrotic phases. When instead of \eqref{eq(1.8)}, the Robin boundary condition
\begin{equation}
\label{eq(1.9)}
\frac{\partial\sigma}{\partial r}(R(t),t)+\beta[\sigma(R(t),t)-\bar\sigma]=0
\end{equation}
is prescribed on the tumor boundary, the model was discussed by Xu-Su \cite{X-S(20)}. Recently, Wu-Wang \cite{W-W(19)} and Song-Hu-Wang \cite{SHW(21)} extended the results to the cases of general nonlinear functions $f$ and $g$, respectively. Since no explicit solution is available now, much more profound relations between all unknown functions were investigated.

In reality, many animals including humans have regular feeding activities, associated with the biological rhythm, and the nutrient concentration in their blood may change periodically over time, cf. \cite{Fo(17)}. Clearly, tumor models established under the assumption that the external nutrient concentration is a periodic function are more reasonable and consistent with real life.
In 2013, Bai-Xu \cite{B-X(13)} considered the nonnecrotic case, i.e., $\sigma>\sigma_D$ for $0<r<R(t)$, and imposed the following periodic boundary condition
\begin{equation}
\label{eq(1.10)}
\sigma(R(t),t)=\phi(t)
\end{equation}
with $\phi(t)$ given above. Assuming \eqref{eq(1.6)}, they obtained the necessary condition
\begin{equation*}
\frac1{\omega}\int^\omega_0\phi(t)dt-\tilde\sigma\le0
\end{equation*}
and sufficient condition
\begin{equation*}
\frac1{\omega}\int^\omega_0\phi(t)dt-\tilde\sigma<0
\end{equation*}
for the global stability of zero steady state, and if
\begin{equation*}
\min_{0\le t\le\omega}\phi(t)-\tilde\sigma>0,
\end{equation*}
then there exists a unique positive periodic solution, which is the global attractor of all other positive solutions. These results were very well improved by He-Xing \cite{H-X(21)} in 2021. Precisely speaking, they proved that zero steady state is still globally stable when $\frac1{\omega}\int^\omega_0\phi(t)dt-\tilde\sigma=0$, and a unique positive periodic solution exists if and only if $\frac1{\omega}\int^\omega_0\phi(t)dt-\tilde\sigma>0$, which is also globally stable under radial perturbations. Very recently, Wu-Xu \cite{W-X(20)} analyzed the model \eqref{eq(1.1)}--\eqref{eq(1.5)} with \eqref{eq(1.3)} replaced by the boundary condition \eqref{eq(1.10)}, and complete existence, uniqueness and stability results were given. Finally, for other related study, we refer the reader to \cite{B-F(05),B-E-Z(08),C-F(01),Cui(05),F-MB(05),H-Z-H(19),S-H-W(21),Hao,Z-X(14)} and the references cited therein.

Motivated by the above works, in this paper we are concerned with the model \eqref{eq(1.1)}--\eqref{eq(1.5)},
where in \eqref{eq(1.3)} the impact from angiogenesis is incorporated (see \cite{F-L(15)}). We aim at investigating the asymptotic behavior of all transient solutions as well as the effects of angiogenesis and a periodic supply of external nutrients on the dynamics of necrotic tumor growth. Throughout we shall assume
\\
(A1) $f\in C^1[0,+\infty)$ is strictly increasing and $f(0)=0$;
\\
(A2) $g \in C^1[0,+\infty)$ is strictly increasing and $g(\tilde\sigma)=0$ for some $\tilde\sigma>\sigma_D$;
\\
(A3) $g(\sigma_D)+\nu\geq0$.\\
Here the relation $g(\sigma_D)+\nu \geq 0$ means that the volume loss rate of living cells at $\sigma_D$ is not greater than the dissolution rate of necrotic cells; see \cite{Cui(06)} for a detailed derivation.

Denote
\begin{equation*}
\bar{S}=\frac1{\omega}\int^{\omega}_0S(\phi(t))dt,
\end{equation*}
which represents the average of the proliferation rate when taking up the nutrient in the host tissue. We are now ready to present the main result of this paper.

\begin{theorem}
\label{thm-1.1}
Suppose the assumptions (A1)--(A3) are satisfied. Then for any $R_0>0$, the problem \eqref{eq(1.1)}--\eqref{eq(1.5)} has a unique solution $(\sigma(r,t),R(t,R_0))$ $(0\le r\le R(t,R_0))$ for all $t\ge0$. Moreover,
\\
(i) if $\bar{S}\le0$, then $\lim_{t\to+\infty}R(t,R_0)=0$ for any given initial value $R_0>0$;
\\
(ii) if $\lim_{t\to+\infty}R(t,R_0)=0$ for some initial value $R_0>0$,
then $\bar{S}\le0$;
\\
(iii) if $\bar{S}>0$, then \eqref{eq(1.1)}--\eqref{eq(1.4)} admits a unique positive $\omega$-time periodic solution $(\sigma_{\rm per}(r,t),R_{\rm per}(t))$ $(0\le r\le R_{\rm per}(t))$ with the property that for any initial value $R_0>0$,
\begin{equation*}
\lim_{t\to+\infty}|R(t,R_0)-R_{\rm per}(t)|=0.
\end{equation*}
\end{theorem}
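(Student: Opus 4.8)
The plan is to reduce the PDE system to a scalar ODE for the radius $R(t)$ and then study that ODE. First I would solve the elliptic problem \eqref{eq(1.1)}--\eqref{eq(1.3)} for $\sigma$ given $R(t)$: since \eqref{eq(1.1)} is an ODE in $r$ for each fixed $t$, the solution $\sigma$ depends only on $R=R(t)$ (not on $t$ directly, apart from through $\phi(t)$), and the necrotic core radius $\rho=\rho(R,\phi(t))$ (the value of $r$ where $\sigma=\sigma_D$, with $\rho=0$ if the tumor is nonnecrotic) is determined implicitly. Substituting into \eqref{eq(1.4)} yields
\begin{equation*}
\frac{dR}{dt}=R\,\Phi(R,t),\qquad \Phi(R,t)=\frac1{R^3}\int_0^{R}S(\sigma(r,t))\,r^2\,dr,
\end{equation*}
which is the genuinely scalar object to analyze. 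One expects (and should verify, using (A1)--(A3) and the monotonicity structure already exploited in \cite{SHW(21),X-S(20)} for the stationary problem) that $\Phi(R,t)$ is continuous, locally Lipschitz in $R$ uniformly in $t$, decreasing in $R$ for each fixed $t$, that $\Phi(R,t)\to S(\phi(t))>-\nu$ as $R\to0^+$, and that $\Phi(R,t)\to-\nu$ (or more precisely a negative limit) as $R\to\infty$. Well-posedness and global existence in Theorem~\ref{thm-1.1} then follow from standard ODE theory together with the a priori bounds coming from these monotonicity and limit properties; positivity of $R$ is preserved because the right-hand side vanishes at $R=0$.

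For part (i), when $\bar S\le0$, I would integrate $\frac{d}{dt}\ln R=\Phi(R,t)$ over a period. Using $\Phi(R,t)\le\Phi(0^+,t)=S(\phi(t))$ together with a careful estimate of $\Phi(R,t)-S(\phi(t))$ (which is strictly negative and bounded away from $0$ on compact $R$-intervals), one gets $\ln R(t+\omega)-\ln R(t)\le \omega\bar S - c\int_t^{t+\omega}\chi(R(s))\,ds$ for a suitable positive function $\chi$; when $\bar S\le0$ this forces $R(t)$ to decrease along the subsequence $t=n\omega$ and, with a contradiction argument ruling out a positive limit inferior, to tend to $0$. For part (ii), the contrapositive is cleaner: if $\bar S>0$, then near $R=0$ one has $\Phi(R,t)\ge S(\phi(t))-\varepsilon(R)$ with $\varepsilon(R)\to0$, so for small enough $R$ the periodic integral of the right-hand side is positive and $R(t)$ cannot converge to $0$ — equivalently, $\lim_{t\to\infty}R(t,R_0)=0$ implies $\bar S\le0$.

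The heart of the matter is part (iii). When $\bar S>0$, I would introduce the Poincar\'e (period) map $\mathcal P\colon R_0\mapsto R(\omega,R_0)$ on $(0,\infty)$. From the monotone dependence of solutions of the scalar ODE on initial data, $\mathcal P$ is strictly increasing; from part (i)'s estimates $\mathcal P(R_0)>R_0$ for small $R_0$, and from the $R\to\infty$ behavior of $\Phi$ one shows $\mathcal P(R_0)<R_0$ for large $R_0$. Hence $\mathcal P$ has a fixed point $R^*$, giving a periodic solution $R_{\rm per}$; uniqueness and global attractivity follow if $\mathcal P$ is a strict contraction toward its fixed point, which I would derive from the strict monotonicity of $\Phi$ in $R$ (this is exactly where (A2), the strict monotonicity of $g$, and the structure of the necrotic core enter decisively). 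The corresponding periodic nutrient profile $\sigma_{\rm per}$ is then recovered from the elliptic solve with $R=R_{\rm per}(t)$. The main obstacle I anticipate is establishing the required qualitative properties of $\Phi(R,t)$ — continuity and one-sided Lipschitz bounds across the nonnecrotic/necrotic transition (where the Heaviside function makes $\rho(R,\phi)$ only piecewise smooth), and the strict monotonicity $\partial_R\Phi<0$ uniformly on compacts — since the absence of an explicit solution formula (unlike \cite{Cui(06),B-X(13)}) means these must be extracted from implicit-function and comparison arguments applied to \eqref{eq(1.1)}--\eqref{eq(1.3)}.
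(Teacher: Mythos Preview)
Your approach is essentially the paper's: reduce to the scalar ODE $R'=R\,G(\phi(t),R)$ (your $\Phi$ is the paper's $G$), use that $G(\bar\sigma,R)$ is strictly decreasing in $R$ with limits $S(\bar\sigma)/3$ as $R\to0^+$ and $-\nu/3$ as $R\to\infty$, and analyze the period map. Two places where the paper is more careful than your sketch. First, $G(\bar\sigma,R)$ is \emph{not} strictly decreasing in $R$ for every $t$: when $\phi(t)\le\sigma_D$ one has $G\equiv-\nu/3$, so your ``$\partial_R\Phi<0$ uniformly on compacts'' fails on those time intervals. The paper handles this by noting that $\bar S>0$ (and likewise $\bar S=0$) forces $\phi(t)>\sigma_D$ on some closed subinterval $[a,b]\subset[0,\omega]$, and it is only there that the strict monotonicity is harvested. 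Second, for the attractivity in (iii) the paper does not invoke an abstract contraction of the Poincar\'e map; it writes $y(t)=\ln\bigl(R(t,R_0)/R_{\rm per}(t)\bigr)$, observes $y$ is monotone by the monotonicity of $G$ in $R$, and rules out a nonzero limit $y_\infty$ by integrating the strict inequality on $[a,b]$ over successive periods to force $y\to\pm\infty$, a contradiction. Uniqueness of $R_{\rm per}$ then follows from attractivity. Finally, the paper flags the borderline case $\bar S=0$ in (i): the inequality $R(t+\omega)\le R(t)$ alone is insufficient, and one needs the uniform-over-periods bound $R(t)\le R(a)\,e^{(2S^*-S_*)/3}$ for all $t\ge a$ to upgrade $\liminf_{t\to\infty}R=0$ to $\lim_{t\to\infty}R=0$; your sketch passes over this, though the fix is routine.
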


We give a complete classification of asymptotic behavior of solutions to the model \eqref{eq(1.1)}--\eqref{eq(1.5)} in Theorem \ref{thm-1.1}. It is easy to check that our results are consistent with those in \cite{H-X(21),W-X(20)}. In the model \eqref{eq(1.1)}--\eqref{eq(1.5)}, besides involving the general nonlinear nutrient consumption rate and the tumor-cell proliferation rate functions, the Robin boundary condition and a periodic supply function of external nutrients, there may exist a necrotic core, and so the tumor may have two different types of free boundaries. All these make the study much more difficult and challenging. The idea of our proof is inspired by that of \cite{H-X(21),W-X(20),Z-C(18)}, which relies essentially on the analysis made in our previous work \cite{SHW(21)} on the particular case $\phi(t)\equiv\bar\sigma$.

The rest of this paper is organized as follows. In Section 2, we collect and establish some auxiliary lemmas. In Section 3, we carry out the proof of Theorem \ref{thm-1.1}. In the last section, we give some conclusions and biological interpretations.

\section{Preliminaries}

In this section, we first collect some known results from \cite{SHW(21)} and then establish some new properties, which will be used in the next section.

Given $\bar\sigma>0$ and $R>0$, consider the problem
\begin{equation}
\label{eq-2.1}
\begin{cases}
    \sigma''(r)+\frac{2}{r}\sigma'(r)=f(\sigma)H(\sigma-\sigma_D),\quad0<r<R,
    \\
    \sigma'(0)=0,\quad\sigma'(R)+\beta(\sigma(R)-\bar\sigma)=0.
\end{cases}
\end{equation}
Then the next result follows immediately from \cite{SHW(21)}.

\begin{lemma}
\label{lem-2.1}
Let (A1) hold. Then the problem \eqref{eq-2.1} admits a unique solution $\sigma(r,\bar\sigma,R)$. Moreover,
the following assertions are true:\\
(a) If $0<\bar\sigma\le\sigma_D$, then for any $R>0$,
\begin{equation}
\label{eq-2.2}
\sigma(r,\bar\sigma,R)\equiv\bar\sigma,\quad0\le r\le R.
\end{equation}
(b) If $\bar\sigma>\sigma_D$, then there exists a threshold $R_c$ for the parameter $R$, which depends on $\bar\sigma$ and satisfies
$R_c'(\bar\sigma)>0$ for $\bar\sigma>\sigma_D$,
\begin{equation}
\label{eq-2.3}
\lim_{\bar\sigma\to\sigma_D^+}R_c(\bar\sigma)=0.
\end{equation}
Precisely speaking,

(b1) if $0<R\leq R_c(\bar\sigma)$, then $\sigma(0,\bar\sigma,R)\ge\sigma_D$, $\sigma_r(r,\bar\sigma,R)>0$ for $0<r\le R$ and $\sigma(r,\bar\sigma,R)$ solves
\begin{equation}
\label{eq-2.4}
\begin{cases}
    \frac{\partial^2\sigma}{\partial r^2}(r,\bar\sigma,R)
    +\frac{2}{r}\frac{\partial\sigma}{\partial r}(r,\bar\sigma,R)
    =f(\sigma(r,\bar\sigma,R)),\quad0<r<R,
    \\
   \frac{\partial\sigma}{\partial r}(0,\bar\sigma,R)=0,
   \quad\frac{\partial\sigma}{\partial r}(R,\bar\sigma,R)+\beta[\sigma(R,\bar\sigma,R)-\bar\sigma]=0;
\end{cases}
\end{equation}

(b2) if $R>R_c(\bar\sigma)$, then
\begin{equation}
    \label{eq-2.5}
 \sigma(r,\bar\sigma,R)=
  \begin{cases}
  V(r,\bar\sigma,R),&\rho(\bar\sigma,R)\le r\le R, \\
  \sigma_D,&0\le r<\rho(\bar\sigma,R),
  \end{cases}
\end{equation}
where $(V(r,\bar\sigma,R),\rho(\bar\sigma,R))$ solves
\begin{equation}
\label{eq-2.6}
\begin{cases}
    \frac{\partial^2V}{\partial r^2}(r,\bar\sigma,R)+\frac{2}{r}\frac{\partial V}{\partial r}(r,\bar\sigma,R)=f(V(r,\bar\sigma,R)),\quad\rho(\bar\sigma,R)<r<R,
    \\
   \frac{\partial V}{\partial r}(\rho(\bar\sigma,R),\bar\sigma,R)=0,
   \quad \frac{\partial V}{\partial r}(R,\bar\sigma,R)+\beta[V(R,\bar\sigma,R)-\bar\sigma]=0,
   \\
   V(\rho(\bar\sigma,R),\bar\sigma,R)=\sigma_D,
\end{cases}
\end{equation}
and $\frac{\partial V}{\partial r}(r,\bar\sigma,R)>0$ for $\rho(\bar\sigma,R)<r\le R$,
$\frac{\partial}{\partial R}\left(\rho(\bar\sigma,R)/R\right)>0$ for $R>R_c(\bar\sigma)$,
\begin{equation}
\label{eq-2.7}
\lim_{R\to R^+_c(\bar\sigma)}\rho(\bar\sigma,R)=0,
\quad\lim_{R\to+\infty}\frac{\rho(\bar\sigma,R)}R=1.
\end{equation}
\end{lemma}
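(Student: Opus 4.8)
The plan is to establish Lemma~\ref{lem-2.1} by reducing the elliptic boundary-value problem \eqref{eq-2.1} to an ODE shooting/integral-equation analysis and then extracting monotonicity and limit properties. First I would treat the trivial case $0<\bar\sigma\le\sigma_D$: since $f\ge0$ and $f(0)=0$, the constant $\sigma\equiv\bar\sigma$ satisfies the ODE (as $H(\bar\sigma-\sigma_D)=0$) and the Robin condition; a maximum-principle/uniqueness argument for \eqref{eq-2.1} (monotone right-hand side in $\sigma$ where $H$ is active, plus the structure of the Robin data) then forces \eqref{eq-2.2}. For $\bar\sigma>\sigma_D$, the key observation is that any solution is radially nondecreasing, so the necrotic region, if present, is a ball $\{r<\rho\}$ on which $\sigma\equiv\sigma_D$, and on $\{\rho<r<R\}$ the function solves the semilinear equation with $f(\sigma)$ and the matching conditions $V(\rho)=\sigma_D$, $V'(\rho)=0$. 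The threshold $R_c(\bar\sigma)$ is then defined by whether the solution of the ``full'' problem \eqref{eq-2.4} (no necrotic core) stays above $\sigma_D$ at the center: $R_c(\bar\sigma)$ is the unique $R$ for which $\sigma(0,\bar\sigma,R)=\sigma_D$.

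The main technical engine is a careful study of the initial value problem $w''+\tfrac2r w'=f(w)$, $w(\rho)=\sigma_D$, $w'(\rho)=0$ on $r\ge\rho$, whose solution $W(r;\rho)$ is increasing in $r$ (because $f>0$ above $\sigma_D$ forces convexity-type behavior after integrating the equation in the form $(r^2 w')'=r^2 f(w)$). For each $R>\rho$ one reads off the ``effective external value'' $\bar\sigma(\rho,R):=W(R;\rho)+\tfrac1\beta W'(R;\rho)$ from the Robin condition; I would show $\bar\sigma(\rho,R)$ is continuous, tends to $\sigma_D$ as $\rho\to R^-$ and (for fixed ratio behavior) blows up or behaves monotonically as $R/\rho$ grows. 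Inverting this relation gives, for each $\bar\sigma>\sigma_D$ and each $R$, a unique $\rho=\rho(\bar\sigma,R)\in[0,R)$, with $\rho=0$ exactly when $R\le R_c(\bar\sigma)$. Monotonicity statements—$R_c'(\bar\sigma)>0$, $\partial_R(\rho/R)>0$—follow by differentiating the defining relations and using that $f$ is strictly increasing (so comparison/Gronwall-type estimates on the linearized equation have a sign); the limit \eqref{eq-2.3} comes from $\rho(\bar\sigma,R)=0$ reducing, as $\bar\sigma\to\sigma_D^+$, to a problem whose solution is $\sigma_D$-close uniformly, forcing $R_c\to0$. The limits in \eqref{eq-2.7}: $\rho\to0$ as $R\to R_c^+$ is continuity of $\rho$ at the threshold, while $\rho/R\to1$ as $R\to\infty$ follows because the outer shell must sustain the jump from $\sigma_D$ up to $\sigma(R)$ (bounded in terms of $\bar\sigma$) within an $O(1)$-thickness annulus—$f$ being strictly increasing prevents $V$ from rising appreciably over a thick shell without $V$ itself becoming large, which the Robin condition caps.

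Since the problem is quoted as following ``immediately from \cite{SHW(21)},'' the proof I would actually write is short: I would recall from that reference the existence, uniqueness, and the above IVP construction, then verify that the Robin datum $\bar\sigma$ enters exactly as a monotone parameter, and cite the corresponding monotonicity and limit lemmas there, adapting constants. The one place I expect genuine friction is the pair of sharp limits $\lim_{R\to R_c^+}\rho=0$ and $\lim_{R\to\infty}\rho/R=1$ together with the strict monotonicity $\partial_R(\rho/R)>0$: these require quantitative control of the outer-shell solution $V$ uniformly in the (two) parameters, i.e. a priori bounds showing $V(R,\bar\sigma,R)$ and $V_r(R,\bar\sigma,R)$ stay in a compact range and depend monotonically on $R$. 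That uniform-in-parameters estimate, obtained by multiplying the equation by $r^2V_r$ or by a direct comparison with the linear problem $f(\sigma)\rightsquigarrow f'(\xi)\sigma$, is the crux; everything else is continuity, the implicit function theorem, and the maximum principle.
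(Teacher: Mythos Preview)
Your proposal is correct and matches the paper's treatment: the paper gives no argument at all for Lemma~\ref{lem-2.1}, stating only that it ``follows immediately from \cite{SHW(21)}.'' You anticipated this and correctly identified that the actual write-up should simply invoke that reference; the detailed shooting/IVP sketch you provide is extra but is a reasonable outline of what \cite{SHW(21)} presumably contains.
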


\begin{remark}
\label{rem-2.1}
Fixing $\hat{R}>0$, one can obtain
\begin{equation}
\label{eq-2.8}
\lim_{(\bar\sigma,R)\to(\sigma_D,\hat{R})\atop \bar\sigma>\sigma_D}\rho(\bar\sigma,R)=\hat{R}.
\end{equation}
In fact, it follows from \eqref{eq-2.3} and \eqref{eq-2.6} that
$$
\frac1{R^2}\int^R_\rho l^2f(V)dl
+\beta\int^R_\rho\frac1{r^2}\int^r_\rho l^2f(V)dldr=\beta(\bar\sigma-\sigma_D).
$$
Thus,
\begin{equation*}
\beta(\bar\sigma-\sigma_D)\ge\frac1{R^2}\int^R_\rho l^2f(V)dl
\ge\frac{f(\sigma_D)}{R^2}\frac{R^3-\rho^3}3>0,
\end{equation*}
which proves \eqref{eq-2.8}.
\end{remark}

We define for any $\bar\sigma>0$ and any $R>0$,
\begin{equation}
\label{eq-2.9}
G(\bar\sigma,R)=\frac1{R^3}\left[\int_{\sigma(r,\bar\sigma,R)>\sigma_D}g(\sigma(r,\bar\sigma,R))r^2dr
-\int_{\sigma(r,\bar\sigma,R)\leq\sigma_D}\nu r^2dr\right],
\end{equation}
where $\sigma(r,\bar\sigma,R)$ is the solution of \eqref{eq-2.1}. Then Lemma \ref{lem-2.1}, combined with Remark \ref{rem-2.1} and the hypothesis (A2), shows that
\begin{equation}
\label{eq-2.10}
G(\bar\sigma,R)=
\begin{cases}
-\frac{\nu}3,&\quad 0<\bar\sigma\le\sigma_D,~R>0,
\\
\frac1{R^3}\int_0^R g(\sigma(r,\bar\sigma,R))r^2dr,&\quad\bar\sigma>\sigma_D,~0<R\le R_c(\bar\sigma),
\\
\frac1{R^3}\int_{\rho(\bar\sigma,R)}^R g(V(r,\bar\sigma,R))r^2dr-\frac{\nu}{3}\frac{\rho^3(\bar\sigma,R)}{R^3}, &\quad\bar\sigma>\sigma_D,~R>R_c(\bar\sigma),
\end{cases}
\end{equation}
and $G$ is continuous on $(0,+\infty)\times(0,+\infty)$. Furthermore, we have the following result.

\begin{lemma}
\label{lem-2.2}
Suppose that (A1), (A2) and (A3) are satisfied. Then
\\
(i) for $\bar\sigma>\sigma_D$ and $R>0$, $G(\bar\sigma,R)$ is strictly increasing in $\bar\sigma$ and strictly decreasing in $R$;
\\
(ii) for any fixed $\bar\sigma>\sigma_D$,
\begin{equation}
\label{eq-2.15}
    \lim_{R\to0^+}G(\bar\sigma,R)=\frac{g(\bar\sigma)}3,\quad
    \lim_{R\to+\infty}G(\bar\sigma,R)=-\frac{\nu}3.
\end{equation}
\end{lemma}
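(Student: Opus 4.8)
The plan is to prove monotonicity in part (i) by a comparison/perturbation argument using the structure of the problem \eqref{eq-2.6}, and to establish the limits in part (ii) by a direct asymptotic analysis as $R\to0^+$ and $R\to+\infty$, exploiting the explicit characterization of $\rho(\bar\sigma,R)$ from Lemma \ref{lem-2.1}.

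For the monotonicity in $\bar\sigma$: fix $R>0$ and take $\sigma_D<\bar\sigma_1<\bar\sigma_2$. Writing $\sigma_i(r)=\sigma(r,\bar\sigma_i,R)$, I would first show $\sigma_1(r)\le\sigma_2(r)$ for all $r\in[0,R]$ by the comparison principle applied to the elliptic equation \eqref{eq-2.1}: the difference $w=\sigma_2-\sigma_1$ satisfies a linear elliptic equation (where both concentrations exceed $\sigma_D$) with a Robin condition whose inhomogeneity $\beta(\bar\sigma_2-\bar\sigma_1)>0$ forces $w\ge0$; where one or both concentrations equal $\sigma_D$ the inequality is immediate from $\sigma_i\ge\sigma_D$. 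Consequently the necrotic core shrinks, $\rho(\bar\sigma_2,R)\le\rho(\bar\sigma_1,R)$. Then in the formula \eqref{eq-2.10} for $G$, the positive integral term increases (larger integrand $g(\sigma_2)\ge g(\sigma_1)$ by (A2), on a larger region, with $g\ge g(\sigma_D)$ there) while the negative necrotic term $-\frac{\nu}{3}\rho^3/R^3$ increases; here assumption (A3), $g(\sigma_D)+\nu\ge0$, is exactly what guarantees that gaining a thin proliferating shell in place of necrotic material does not decrease $G$, handling the boundary of the two regions. Strict monotonicity follows since $\sigma_2>\sigma_1$ on a set of positive measure. For the monotonicity in $R$: fix $\bar\sigma>\sigma_D$ and compare $R_1<R_2$. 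The key point is that $G(\bar\sigma,R)$ is a volume average of $S(\sigma)=g(\sigma)H(\sigma-\sigma_D)-\nu[1-H(\sigma-\sigma_D)]$ over the ball of radius $R$, and enlarging $R$ both lowers $\sigma$ pointwise (more consumption before reaching the boundary) and, by Lemma \ref{lem-2.1}, strictly increases $\rho/R$, i.e. adds proportionally more necrotic volume; since $S$ is largest near the boundary, the average decreases. I would make this rigorous either by differentiating $G$ in $R$ using \eqref{eq-2.6} and the known sign $\partial_R(\rho/R)>0$, or by a direct comparison of the two averages.

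For part (ii): as $R\to0^+$ with $\bar\sigma>\sigma_D$ fixed, we are in case (b1) for small $R$ (since $R_c(\bar\sigma)>0$), so $G(\bar\sigma,R)=R^{-3}\int_0^R g(\sigma(r,\bar\sigma,R))r^2\,dr$ with no necrotic core. From \eqref{eq-2.4}, integrating gives $\sigma(R)-\sigma(0)=O(R^2)$ and in fact $\sigma(r,\bar\sigma,R)\to\bar\sigma$ uniformly on $[0,R]$ as $R\to0^+$ (the Robin condition forces $\sigma(R,\bar\sigma,R)\to\bar\sigma$ since the flux $\sigma'(R)=R^{-2}\int_0^R l^2 f(\sigma)\,dl\to0$). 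Hence $g(\sigma(r,\bar\sigma,R))\to g(\bar\sigma)$ uniformly and $R^{-3}\int_0^R g(\sigma)r^2\,dr\to g(\bar\sigma)/3$. As $R\to+\infty$, we are in case (b2) and, by \eqref{eq-2.7}, $\rho(\bar\sigma,R)/R\to1$, so the necrotic core occupies almost the whole ball: the proliferating shell $\rho<r<R$ contributes $R^{-3}\int_\rho^R g(V)r^2\,dr$, which I would bound by $C(1-\rho^3/R^3)\to0$ using that $V$ is bounded (by the maximum principle $\sigma_D\le V\le\bar\sigma$, hence $g(V)$ is bounded), while the necrotic term $-\frac{\nu}{3}\rho^3/R^3\to-\frac{\nu}{3}$. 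This gives the second limit.

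The main obstacle I anticipate is the monotonicity of $G$ in $R$, specifically handling the moving free boundary $\rho(\bar\sigma,R)$ cleanly: one must track simultaneously that $\sigma$ decreases pointwise and that the necrotic fraction $\rho^3/R^3$ increases, and combine these with the sign information from (A2)–(A3) without the convenience of an explicit solution formula (unlike the linear case treated in \cite{Cui(06)}). The cleanest route is probably to differentiate $G$ with respect to $R$, using the boundary conditions in \eqref{eq-2.6} to evaluate the boundary terms and the identity in Remark \ref{rem-2.1} to control $\partial_R\rho$, reducing everything to the known sign $\partial_R(\rho/R)>0$ from Lemma \ref{lem-2.1}; care is needed at $R=R_c(\bar\sigma)$ where the formula for $G$ switches cases, but continuity of $G$ (already noted after \eqref{eq-2.10}) together with $\lim_{R\to R_c^+}\rho=0$ from \eqref{eq-2.7} bridges the two regimes.
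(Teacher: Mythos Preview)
Your proposal is correct. Part (ii) matches the paper's argument essentially line for line, and for the monotonicity in $R$ the paper, like you, does not give a self-contained proof but simply refers to \cite{SHW(21)}.

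The one genuine difference is the monotonicity in $\bar\sigma$. You compare two solutions directly: take $\bar\sigma_1<\bar\sigma_2$, show $\sigma(\cdot,\bar\sigma_1,R)\le\sigma(\cdot,\bar\sigma_2,R)$ by the comparison principle, deduce $\rho(\bar\sigma_2,R)\le\rho(\bar\sigma_1,R)$, and then read off $G(\bar\sigma_1,R)\le G(\bar\sigma_2,R)$ from \eqref{eq-2.10} using (A2) and (A3). The paper instead differentiates: it computes $\partial_{\bar\sigma}G$ as in \eqref{eq-2.11}, derives the linear problem \eqref{eq-2.12} for $\partial V/\partial\bar\sigma$, and applies the strong maximum principle and Hopf lemma to that linearized problem to obtain $\partial V/\partial\bar\sigma>0$ on $(\rho,R]$ and $\partial\rho/\partial\bar\sigma<0$, whence $\partial_{\bar\sigma}G>0$. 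Your route is more elementary and avoids smooth-dependence of $(V,\rho)$ on $\bar\sigma$; the price is that the comparison step across the free boundary (ruling out $\rho(\bar\sigma_2,R)>\rho(\bar\sigma_1,R)$, which your phrase ``where one or both concentrations equal $\sigma_D$ the inequality is immediate'' glosses over) needs an extra Hopf-type argument at $r=\rho$, and the strict inequality has to be extracted separately. The paper's differentiation approach delivers strict monotonicity in one stroke and is better suited to the case-splitting at $R=R_c(\bar\sigma)$, which it handles by treating the nonnecrotic and necrotic regimes separately and invoking continuity of $G$ to bridge them.
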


\begin{proof}
(i) For brevity, here we only prove the strictly increasing property of the function $G$ with respect to $\bar\sigma$, and refer the reader to \cite{SHW(21)} for the strictly decreasing property of $G$ with respect to $R$.
Given $R_*>0$, if there exists $\bar\sigma_*>\sigma_D$ such that $R_c(\bar\sigma_*)=R_*$, then we denote $\bar\sigma_*=R_c^{-1}(R_*)$; otherwise, $\bar\sigma_*=+\infty$. We claim that $\partial_{\bar\sigma}G(\bar\sigma,R_*)>0$ for $\sigma_D<\bar\sigma<\bar\sigma_*$ or $\bar\sigma>\bar\sigma_*$. In fact, when $\sigma_D<\bar\sigma<\bar\sigma_*$, one finds from \eqref{eq-2.10} and \eqref{eq-2.6} that
\begin{equation}
\label{eq-2.11}
\frac{\partial G}{\partial\bar\sigma}(\bar\sigma,R_*)=\frac1{R_*^3}\left\{\int^{R_*}_{\rho(\bar\sigma,R_*)}
g'(V(r,\bar\sigma,R_*))
\frac{\partial V}{\partial\bar\sigma}(r,\bar\sigma,R_*)r^2dr-[g(\sigma_D)+\nu]\rho^2(\bar\sigma,R_*)
\frac{\partial\rho}{\partial\bar\sigma}(\bar\sigma,R_*)\right\},
\end{equation}
and
\begin{equation}
\label{eq-2.12}
\begin{cases}
    -\frac{\partial^2}{\partial r^2}\left(\frac{\partial V}{\partial\bar\sigma}\right)(r,\bar\sigma,R_*)-\frac{2}{r}\frac{\partial}{\partial r}\left(\frac{\partial V}{\partial\bar\sigma}\right)(r,\bar\sigma,R_*)+f'(V(r,\bar\sigma,R_*))\frac{\partial V}{\partial\bar\sigma}(r,\bar\sigma,R_*)=0,\quad\rho(\bar\sigma,R_*)<r<R_*,
    \\
   \frac{\partial}{\partial r}\left(\frac{\partial V}{\partial\bar\sigma}\right)(\rho(\bar\sigma,R_*),\bar\sigma,R_*)=
-f(\sigma_D)\frac{\partial\rho}{\partial\bar\sigma}(\bar\sigma,R_*),
   \quad \frac{\partial}{\partial r}\left(\frac{\partial V}{\partial\bar\sigma}\right)(R_*,\bar\sigma,R_*)+\beta\frac{\partial V}{\partial\bar\sigma}(R_*,\bar\sigma,R_*)=\beta,
   \\
   \frac{\partial V}{\partial\bar\sigma}(\rho(\bar\sigma,R_*),\bar\sigma,R_*)=0.
\end{cases}
\end{equation}
Applying strong maximum principle to the problem \eqref{eq-2.12} yields
$$
\frac{\partial V}{\partial\bar\sigma}(r,\bar\sigma,R_*)>0\quad{\rm for ~each}~\rho(\bar\sigma,R_*)<r\le R_*\qquad{\rm and}~\qquad\frac{\partial\rho}{\partial\bar\sigma}(\bar\sigma,R_*)<0,
$$
which, combined with the assumptions (A2) and (A3), shows
\begin{equation}
\label{eq-2.13}
\frac{\partial G}{\partial\bar\sigma}(\bar\sigma,R_*)>0\quad{\rm for}~\sigma_D<\bar\sigma<\bar\sigma_*.
\end{equation}
While when $\bar\sigma>\bar\sigma_*$, there holds $R_*<R_c(\bar\sigma)$. Thus, by \eqref{eq-2.10} and \eqref{eq-2.4},
\begin{equation*}
\frac{\partial G}{\partial\bar\sigma}(\bar\sigma,R_*)
=\frac1{R_*^3}\int^{R_*}_0
g'(\sigma(r,\bar\sigma,R_*))
\frac{\partial\sigma}{\partial\bar\sigma}(r,\bar\sigma,R_*)r^2dr,
\end{equation*}
\begin{equation*}
\begin{cases}
    -\frac{\partial^2}{\partial r^2}\left(\frac{\partial \sigma}{\partial\bar\sigma}\right)(r,\bar\sigma,R_*)-\frac{2}{r}\frac{\partial}{\partial r}\left(\frac{\partial \sigma}{\partial\bar\sigma}\right)(r,\bar\sigma,R_*)+f'( \sigma(r,\bar\sigma,R_*))\frac{\partial \sigma}{\partial\bar\sigma}(r,\bar\sigma,R_*)=0,\quad0<r<R_*,
    \\
   \frac{\partial}{\partial r}\left(\frac{\partial \sigma}{\partial\bar\sigma}\right)(0,\bar\sigma,R_*)=0,
   \quad \frac{\partial}{\partial r}\left(\frac{\partial \sigma}{\partial\bar\sigma}\right)(R_*,\bar\sigma,R_*)+\beta\frac{\partial \sigma}{\partial\bar\sigma}(R_*,\bar\sigma,R_*)=\beta.
\end{cases}
\end{equation*}
Using strong maximum principle and the condition (A2) again, we obtain $\partial_{\bar\sigma}G(\bar\sigma,R_*)>0$ for $\bar\sigma>\bar\sigma_*$, which together with
\eqref{eq-2.13}, the continuity of the function $G$ and the arbitrariness of $R_*$ implies that $G(\bar\sigma,R)$ is strictly increasing in $\bar\sigma$ for $\bar\sigma>\sigma_D$.

(ii) For $\bar\sigma>\sigma_D$ and $R$ small enough,
\begin{equation}
\label{eq-2.14}
G(\bar\sigma,R)=\frac1{R^3}\int_0^R g(\sigma(r,\bar\sigma,R))r^2dr.
\end{equation}
If we set
$$
U(s,\bar\sigma,R)=\sigma(r,\bar\sigma,R)\quad{\rm with}~s=\frac{r}R,
$$
then \eqref{eq-2.14} becomes
$$
G(\bar\sigma,R)=\int^1_0g(U(s,\bar\sigma,R))s^2ds.
$$
Since $\lim_{R\to0^+}U(s,\bar\sigma,R)=\bar\sigma$ for every $0\le s\le1$ (see (2.18) in \cite{SHW(21)}), the first equality in \eqref{eq-2.15} immediately follows the Lebesgue dominant convergence theorem. On the other hand, for $\bar\sigma>\sigma_D$ and $R$ large enough, we see from
\eqref{eq-2.10} that
\begin{equation}
\label{eq-2.16}
G(\bar\sigma,R)=\frac1{R^3}\int_{\rho(\bar\sigma,R)}^R g(V(r,\bar\sigma,R))r^2dr-\frac{\nu}{3}\frac{\rho^3(\bar\sigma,R)}{R^3}.
\end{equation}
Based on the conclusion (b2) in Lemma \ref{lem-2.1}, sending $R\to+\infty$ in \eqref{eq-2.16} yields the second equality in \eqref{eq-2.15}. The proof is complete.
\end{proof}

\begin{remark}
\label{rem-2.2}
We stress that the assumptions (A2) and (A3) tell us that the function $S(\bar\sigma)$, given by  \eqref{eq(1.7)}, is nondecreasing in $(0,+\infty)$. Moreover, we find from the definition of $G$ and Lemma \ref{lem-2.2} that
\begin{equation}
\label{eq-2.17}
G(\bar\sigma,R)\le\frac{S(\bar\sigma)}3\quad{\rm for~all}~\bar\sigma>0~{\rm and}~R>0.
\end{equation}
\end{remark}

\section{Proof of the main result}

In this section, we will prove Theorem \ref{thm-1.1}. We first establish the existence and uniqueness of transient solutions to the system \eqref{eq(1.1)}--\eqref{eq(1.5)} for any $R_0>0$.

\begin{proposition}
\label{prop-3.1}
Let the assumptions (A1)--(A3) hold. Then for any $R_0>0$, the problem \eqref{eq(1.1)}--\eqref{eq(1.5)} allows a unique solution $(\sigma(r,t),R(t,R_0))$ $(0\le r\le R(t,R_0))$ for all $t\ge0$.
\end{proposition}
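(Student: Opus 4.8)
The plan is to reduce the free boundary problem to a scalar ODE for $R(t)$ and then invoke Picard--Lindel\"of together with a global-existence argument. The key observation is that equations \eqref{eq(1.1)}--\eqref{eq(1.3)} constitute, for each fixed $t$, an elliptic boundary value problem of exactly the type treated in Lemma \ref{lem-2.1}, with the constant $\bar\sigma$ there replaced by the (given, continuous, positive, $\omega$-periodic) value $\phi(t)$ and the radius $R$ replaced by $R(t)$. Hence for any continuous positive function $R(t)$ the nutrient profile is uniquely determined as $\sigma(r,t)=\sigma(r,\phi(t),R(t))$, and substituting this into \eqref{eq(1.4)} recasts that equation, via the definition \eqref{eq-2.9} of $G$, as

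\begin{equation*}
\frac{dR}{dt}=R(t)\,G(\phi(t),R(t)),\qquad R(0)=R_0.
\end{equation*}

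First I would make this reduction rigorous: use Lemma \ref{lem-2.1} to define $\sigma(r,\phi(t),R(t))$, check that the right-hand side of \eqref{eq(1.4)} equals $R^3(t)G(\phi(t),R(t))$, and note that any solution of \eqref{eq(1.1)}--\eqref{eq(1.5)} yields a solution of this ODE and conversely; uniqueness for the PDE system then follows from uniqueness for the ODE.

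Second, I would establish local existence and uniqueness for the scalar initial value problem. The right-hand side $F(t,R):=R\,G(\phi(t),R)$ is continuous on $(0,+\infty)\times(0,+\infty)$ by the continuity of $G$ recorded after \eqref{eq-2.10} and the continuity of $\phi$; for Picard--Lindel\"of I need local Lipschitz continuity in $R$. For $R$ in a compact subset of $(0,+\infty)$ and $t$ in a period this follows from the smooth dependence of $\sigma$ (hence of $V$ and $\rho$) on the parameter $R$ — the same sensitivity analysis, using the strong maximum principle on the linearized problems, that was carried out in the proof of Lemma \ref{lem-2.2} and in \cite{SHW(21)} — which gives that $G$, and therefore $F$, is $C^1$ in $R$ away from $R=0$. (Alternatively one can invoke the monotonicity of $G(\bar\sigma,\cdot)$ from Lemma \ref{lem-2.2}(i) together with local boundedness of $\partial_R G$.) Thus there is a unique maximal solution $R(t,R_0)$ on a maximal interval $[0,T_{\max})$ with $R(t,R_0)>0$ there.

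Third, I would show $T_{\max}=+\infty$, i.e. rule out finite-time blow-up and finite-time extinction. For the upper bound, from \eqref{eq-2.17} in Remark \ref{rem-2.2} we have $G(\phi(t),R)\le S(\phi(t))/3$, and since $S$ is nondecreasing (Remark \ref{rem-2.2}) and $\phi$ is continuous and periodic, $S(\phi(t))\le S(\max_{[0,\omega]}\phi)=:C_1$; hence $R'\le (C_1/3)R$, so $R(t,R_0)\le R_0 e^{C_1 t/3}$ stays finite on any bounded interval. For the lower bound I would bound $G$ from below on the relevant range: since $-\nu/3\le G(\phi(t),R)$ always (this is immediate from the definition \eqref{eq-2.9} and $g\ge g(\sigma_D)\ge-\nu$ on $\{\sigma>\sigma_D\}$, or from \eqref{eq-2.10}), we get $R'\ge-(\nu/3)R$, so $R(t,R_0)\ge R_0 e^{-\nu t/3}>0$ on bounded intervals and the solution cannot reach $0$ in finite time. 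Consequently the maximal solution is global, and translating back through the reduction gives the unique solution $(\sigma(r,t),R(t,R_0))$ of \eqref{eq(1.1)}--\eqref{eq(1.5)} for all $t\ge0$.

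The main obstacle is the regularity of the map $R\mapsto G(\phi(t),R)$ at the threshold $R=R_c(\phi(t))$, where the structure of $\sigma$ switches between the nonnecrotic form \eqref{eq-2.4} and the necrotic form \eqref{eq-2.5}--\eqref{eq-2.6} and the inner free boundary $\rho$ appears. One must verify that $G$ is not merely continuous there (already known) but Lipschitz in $R$, using $\lim_{R\to R_c^+}\rho(\phi(t),R)=0$ from \eqref{eq-2.7} so that the contribution of the necrotic core vanishes smoothly as $R\downarrow R_c$; this matching argument, carried out as in \cite{SHW(21)}, is the only delicate point and everything else is a routine application of ODE theory.
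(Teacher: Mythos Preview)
Your proposal is correct and follows essentially the same approach as the paper: reduce \eqref{eq(1.1)}--\eqref{eq(1.5)} via Lemma~\ref{lem-2.1} to the scalar ODE $R'=R\,G(\phi(t),R)$, then invoke standard ODE theory together with the bounds $-\nu/3\le G\le S(\phi(t))/3$ (yielding in particular $R(t,R_0)\ge R_0 e^{-\nu t/3}$) to obtain global existence and uniqueness. The paper's own proof is considerably terser---it simply cites Lemma~\ref{lem-2.2} and ``standard theory of ordinary differential equations'' without spelling out the Lipschitz regularity at the threshold $R=R_c(\phi(t))$---so your more careful discussion of that matching point is a welcome elaboration rather than a deviation.
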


\begin{proof}
By Lemma \ref{lem-2.1}, at any $t>0$ and for any given $R(t)>0$, \eqref{eq(1.1)}--\eqref{eq(1.3)} allows a unique solution $\sigma(r,\phi(t),R(t))$. Substituting it into \eqref{eq(1.4)}, we obtain an initial value problem:
\begin{equation}
\label{eq-3.1}
\begin{cases}
\frac{dR}{dt}=R(t)G(\phi(t),R(t))\quad{\rm for}~t>0,\\
R(0)=R_0,
\end{cases}
\end{equation}
where the function $G$ is defined by \eqref{eq-2.9}.
Then based on Lemma \ref{lem-2.2}, the standard theory of ordinary differential equations shows that the problem \eqref{eq-3.1} has a unique global solution $R(t,R_0)$ and
\begin{equation}
\label{eq-3.6}
R(t,R_0)\ge R_0e^{-\nu t/3}
\end{equation}
for all $t\ge0$. We thus complete the the proof of this proposition.
\end{proof}

Before proceeding to the discussion of the stability of zero steady state of \eqref{eq-3.1}, we need some more notations. Set
\begin{equation}
\label{eq-3.2}
S^*=\max_{0\le t\le\omega}\int^t_0S(\phi(\tau))d\tau,
\quad S_*=\min_{0\le t\le\omega}\int^t_0S(\phi(\tau))d\tau.
\end{equation}

\begin{proposition}
\label{prop-3.2}
Assume the assumptions (A1)--(A3) are satisfied. Let $R(t,R_0)$ be the unique solution to the problem \eqref{eq-3.1}. Then the following conclusions are true:
\\
(i) If $\bar{S}\le0$, then $\lim_{t\to+\infty}R(t,R_0)=0$ for any given initial value $R_0>0$;
\\
(ii) If $\lim_{t\to+\infty}R(t,R_0)=0$ for some initial value $R_0>0$,
then $\bar{S}\le0$.
\end{proposition}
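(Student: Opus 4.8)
The plan is to analyse both statements through the time-$\omega$ Poincar\'e (period) map attached to the scalar equation \eqref{eq-3.1}. For $\xi>0$ put $P(\xi):=R(\omega,\xi)$; by Proposition \ref{prop-3.1} and continuous dependence of ODE solutions on initial data, $P$ is a well-defined continuous self-map of $(0,+\infty)$, and $R(n\omega,\xi)=P^{n}(\xi)$ for every integer $n\ge1$. Integrating $\frac{d}{dt}\ln R=G(\phi(t),R)$ over one period gives
\begin{equation*}
\frac{P(\xi)}{\xi}=\exp\!\Big(\int_0^\omega G(\phi(t),R(t,\xi))\,dt\Big),
\end{equation*}
while integrating the differential inequality $\frac{d}{dt}\ln R=G(\phi(t),R)\le\tfrac13 S(\phi(t))$ (the upper bound coming from Remark \ref{rem-2.2}) over $[0,t]$ yields the crude estimate $R(t,\xi)\le\xi\exp\!\big(\tfrac13\int_0^t S(\phi(\tau))\,d\tau\big)$. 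Besides this upper bound we shall also use the lower bound $G(\bar\sigma,R)\ge-\nu/3$, valid for all $\bar\sigma,R>0$, which follows at once from \eqref{eq-2.10} and (A3).

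\textit{Proof of (i).} Suppose $\bar S\le0$. I first claim $P(\xi)<\xi$ for every $\xi>0$. By \eqref{eq-3.6} we have $R(t,\xi)>0$, so $G(\phi(t),R(t,\xi))\le\tfrac13 S(\phi(t))$. If $\phi(t)\le\sigma_D$ for almost every $t$, then by continuity $\phi\le\sigma_D$ on all of $[0,\omega]$, hence $G(\phi(t),R(t,\xi))\equiv-\nu/3$ and $\int_0^\omega G(\phi(t),R(t,\xi))\,dt=-\nu\omega/3<0$. Otherwise $\phi(t)>\sigma_D$ on a set of positive measure; on that set Lemma \ref{lem-2.2}(i)--(ii) together with $R(t,\xi)>0$ give $G(\phi(t),R(t,\xi))<\tfrac13 g(\phi(t))=\tfrac13 S(\phi(t))$ strictly, so $\int_0^\omega G(\phi(t),R(t,\xi))\,dt<\tfrac13\int_0^\omega S(\phi(t))\,dt=\tfrac{\omega}{3}\bar S\le0$. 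In both cases $P(\xi)<\xi$. Therefore $R_n:=P^{n}(R_0)$ is strictly decreasing and, being positive, tends to some $R_\infty\ge0$; by continuity of $P$ one gets $P(R_\infty)=R_\infty$, impossible if $R_\infty>0$, so $R_\infty=0$. Finally, for $t=n\omega+s$ with $0\le s<\omega$, integrating $\frac{d}{dt}\ln R\le\tfrac13 S(\phi(t))$ from $n\omega$ to $t$ and using the $\omega$-periodicity of $\phi$ gives $R(t,R_0)\le R_n\exp\!\big(\tfrac13\int_0^s S(\phi(\tau))\,d\tau\big)\le R_n\,e^{S^*/3}\to0$ as $t\to+\infty$.

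\textit{Proof of (ii).} We argue contrapositively: assuming $\bar S>0$, we show $R(t,R_0)\not\to0$ for every $R_0>0$. First, $P(\xi)>\xi$ for all small $\xi>0$. Indeed, $R(t,\xi)\le\xi e^{S^*/3}$ on $[0,\omega]$, so $R(t,\xi)\to0$ uniformly there as $\xi\to0^+$; moreover, for each fixed $t$, $G(\phi(t),R)\to\tfrac13 S(\phi(t))$ as $R\to0^+$ --- by Lemma \ref{lem-2.2}(ii) when $\phi(t)>\sigma_D$ and trivially when $\phi(t)\le\sigma_D$. Since $-\nu/3\le G(\phi(t),R(t,\xi))\le\tfrac13\max_{[0,\omega]}S(\phi)$, the dominated convergence theorem gives $\int_0^\omega G(\phi(t),R(t,\xi))\,dt\to\tfrac13\int_0^\omega S(\phi(t))\,dt=\tfrac{\omega}{3}\bar S>0$; hence there is $\xi_0>0$ with $\int_0^\omega G(\phi(t),R(t,\xi))\,dt>\tfrac{\omega}{6}\bar S$ for $0<\xi<\xi_0$, i.e. $P(\xi)>\xi e^{\omega\bar S/6}>\xi$. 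Now if $R(t,R_0)\to0$ for some $R_0>0$, then $R_n=P^{n}(R_0)\to0$, so there is $N$ with $0<R_n<\xi_0$ for all $n\ge N$; but then $R_{n+1}=P(R_n)>R_n$ for all $n\ge N$, i.e. $(R_n)_{n\ge N}$ is strictly increasing --- impossible, since it also tends to $0$ while $R_N>0$. Hence $\bar S\le0$, as required.

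\textit{Main obstacle.} Because $\phi(t)$, and with it $G(\phi(t),\cdot)$, oscillates in $t$, the radius $R(t,R_0)$ need not be monotone in $t$, so no direct comparison argument on $[0,+\infty)$ is available; passing to the stroboscopic sequence $R_n=R(n\omega,R_0)=P^{n}(R_0)$ restores a monotone (or monotone-then-contradictory) iteration. The two genuinely delicate points are: in (i), the \emph{strict} inequality $P(\xi)<\xi$ in the borderline case $\bar S=0$, which rests on the fact (Lemma \ref{lem-2.2}) that for $\bar\sigma>\sigma_D$ the value $G(\bar\sigma,R)$ lies strictly below its limit $\tfrac13 g(\bar\sigma)$ for every $R>0$ --- exactly the feature that made the nonnecrotic analysis of Bai--Xu need sharpening; and in (ii), the interchange of limit and integral in $\int_0^\omega G(\phi(t),R(t,\xi))\,dt$ as $\xi\to0^+$, where the two-sided bound on $G$ (upper from Remark \ref{rem-2.2}, lower from (A3)) and the continuous dependence of $R(t,\xi)$ on $\xi$ do the work.
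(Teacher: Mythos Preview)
Your proof is correct and shares the same analytic backbone as the paper's---Remark~\ref{rem-2.2}, the strict monotonicity of $G$ in $R$, and the limit $G(\bar\sigma,R)\to S(\bar\sigma)/3$ as $R\to0^+$---but the organization of part~(i) is genuinely different. The paper proceeds in three steps: it first derives a uniform one-period growth bound $R(t,R_0)\le R(a,R_0)e^{(2S^*-S_*)/3}$ for $t\ge a$, then shows $\liminf_{t\to\infty}R(t,R_0)=0$ by a contradiction argument (if the liminf were $\alpha>0$, monotonicity of $G$ in $R$ forces geometric decay of $R(t_*+n\omega,R_0)$), and finally combines these to get $\lim_{t\to\infty}R=0$. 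You instead work directly with the period map $P(\xi)=R(\omega,\xi)$: the key observation that $G(\bar\sigma,R)<\tfrac13 g(\bar\sigma)$ \emph{strictly} for every $R>0$ and $\bar\sigma>\sigma_D$ gives $P(\xi)<\xi$ outright, so the stroboscopic sequence $R_n=P^n(R_0)$ is strictly decreasing and its limit, if positive, would be a fixed point of $P$ by continuous dependence---a contradiction. Your route is shorter and dispenses with the separate liminf step (which the paper flags in Remark~\ref{rem-3.1} as the delicate part when $\bar S=0$), at the cost of invoking continuity of $P$; this is legitimate here since uniqueness plus continuity of $G$ yields continuous dependence on initial data in one space dimension. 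For part~(ii) the two arguments are essentially identical: the paper fixes $\delta>0$ with $\int_0^\omega G(\phi,\delta)\,dt>0$ and uses $R(t,R_0)\le\delta$ eventually, while you phrase the same thing as $P(\xi)>\xi$ for small $\xi$ via dominated convergence---the latter is a minor variant, since the paper's version follows more directly from the monotonicity of $G$ in $R$ without needing DCT.
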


\begin{proof}
For convenience, the proof of the assertion (i) is split into three steps.

{\bf Step 1.} We show that for any $a\ge0$ and $t\ge a$,
\begin{equation}
\label{eq-3.0}
R(t,R_0)\le R(a,R_0)e^{\frac{2S^*-S_*}3}.
\end{equation}
Indeed, given $t_2\ge t_1\ge0$, it follows from \eqref{eq-3.1}, \eqref{eq-3.6} and \eqref{eq-2.17} that
\begin{equation}
\label{eq-3.3}
R(t_2,R_0)=R(t_1,R_0)e^{\int^{t_2}_{t_1}G(\phi(\tau),R(\tau,R_0))d\tau}
\le R(t_1,R_0)e^{\frac13\int^{t_2}_{t_1}S(\phi(\tau))d\tau}.
\end{equation}
As a straightforward consequence, we obtain
\begin{equation}
R(t+\omega,R_0)\le R(t,R_0)\quad{\rm for~all}~t\ge0.
\label{eq-3.4}
\end{equation}
On the other hand, letting $a\le t\le a+\omega$, we obtain from \eqref{eq-3.3} that
\begin{equation}
\label{eq-3.13}
R(t,R_0)\le R(a,R_0)e^{\frac13\int^{t}_aS(\phi(\tau))d\tau}.
\end{equation}
We further prove that
\begin{equation}
\label{eq-3.10}
\int^{t}_aS(\phi(\tau))d\tau\le2S^*-S_*\quad{\rm for}~a\le t\le a+\omega.
\end{equation}
Since there exists a unique nonnegative integer $k$ such that $k\omega\le a<(k+1)\omega$, there holds $k\omega\le a\le t<(k+1)\omega$ or $k\omega\le a<(k+1)\omega\le t\le a+\omega<(k+2)\omega$ for $a\le t\le a+\omega$. If $k\omega\le a\le t<(k+1)\omega$, then
\begin{equation*}
\int^t_aS(\phi(\tau))d\tau=\int^t_{k\omega}S(\phi(\tau))d\tau-\int^a_{k\omega}S(\phi(\tau))d\tau
\le S^*-S_*,
\end{equation*}
while if $k\omega\le a<(k+1)\omega\le t\le a+\omega<(k+2)\omega$, then
\begin{equation*}
\begin{split}
\int^t_aS(\phi(\tau))d\tau&=\int^{(k+1)\omega}_aS(\phi(\tau))d\tau
+\int^t_{(k+1)\omega}S(\phi(\tau))d\tau
\\
&=\int^{(k+1)\omega}_{k\omega}S(\phi(\tau))d\tau-\int^a_{k\omega}S(\phi(\tau))d\tau
+\int^t_{(k+1)\omega}S(\phi(\tau))d\tau
\\
&\le 2S^*-S_*,
\end{split}
\end{equation*}
which proves \eqref{eq-3.10}. Combining \eqref{eq-3.13} and \eqref{eq-3.10}, we derive
\begin{equation}
R(t,R_0)\le R(a,R_0)e^{\frac{2S^*-S_*}3}\quad{\rm for~any}~a\le t\le a+\omega.
\label{eq-3.5}
\end{equation}
Finally, \eqref{eq-3.0} follows from \eqref{eq-3.4} and \eqref{eq-3.5}.

{\bf Step 2.} We claim that
\begin{equation}
\label{eq-3.7}
\liminf_{t\to+\infty}R(t,R_0)=0.
\end{equation}
Argue by contradiction. Suppose that \eqref{eq-3.7} is false and then in view of \eqref{eq-3.4},
\begin{equation}
\label{eq-3.8}
\liminf_{t\to+\infty}R(t,R_0)=\alpha
\end{equation}
for some $\alpha>0$. As a result, there exists $M>0$ such that
\begin{equation}
\label{eq-3.11}
R(t,R_0)\ge\frac{\alpha}2\quad{\rm for~each}~t\ge M.
\end{equation}
Hence, for any fixed $t_*>M$ and any positive integer $n$, one can apply Lemma \ref{lem-2.2} to derive
\begin{equation}
\label{eq-3.9}
R(t_*+n\omega,R_0)\le R(t_*,R_0)e^{\int^{t_*+n\omega}_{t_*}G\left(\phi(\tau),\frac\alpha2\right)d\tau}
=R(t_*,R_0)e^{n\int^{\omega}_0G\left(\phi(\tau),\frac\alpha2\right)d\tau}.
\end{equation}
Besides, the condition $\bar{S}\le0$, combined with (A2), (A3) and Lemma \ref{lem-2.2}, implies
\begin{equation*}
\int^{\omega}_0G\left(\phi(\tau),\frac\alpha2\right)d\tau<0.
\end{equation*}
Letting $n\to\infty$ in \eqref{eq-3.9}, we therefore arrive at
$$
\lim_{n\to\infty}R(t_*+n\omega,R_0)=0,
$$
which contradicts \eqref{eq-3.11}. This proves our claim.

{\bf Step 3.} We now prove that
\begin{equation}
\label{eq-3.12}
\lim_{t\to+\infty}R(t,R_0)=0.
\end{equation}
For any given $\varepsilon>0$, we see from \eqref{eq-3.7} that there exists $t_\varepsilon>0$ such that
$$
0<R(t_\varepsilon,R_0)<\varepsilon e^{-\frac{2S^*-S_*}3},
$$
which together with \eqref{eq-3.0} implies
$$
R(t,R_0)<\varepsilon\quad{\rm for~any}~t\ge t_\varepsilon;
$$
thus, \eqref{eq-3.12} holds true.

Next, we show that if $\lim_{t\to+\infty}R(t,R_0)=0$ for some $R_0>0$, then $\bar{S}\le0$. If not, there holds $\bar{S}>0$. Combining \eqref{eq-2.15} and the Lebesgue dominated convergence theorem, we get
$$
\lim_{R\to0^+}\int^{\omega}_0 G(\phi(t),R)dt=\frac13\int^{\omega}_0 S(\phi(t))dt=\frac{\omega\bar{S}}3>0.
$$
Thus, there exists $\delta>0$ such that $\int^{\omega}_0 G(\phi(t),\delta)dt>0$. On the other hand, for the positive constant $\delta$ above, $\lim_{t\to+\infty}R(t,R_0)=0$ implies that there exists $T>0$ such that
$$
R(t,R_0)\le\delta\quad{\rm for ~any}~t\ge T.
$$
Consequently, using the assertion (i) of Lemma \ref{lem-2.2} we arrive at for each $t\ge T$,
\begin{equation*}
\begin{split}
R(t+\omega,R_0)&=R(t,R_0)e^{\int^{t+\omega}_t G(\phi(\tau),R(\tau,R_0))d\tau}
\ge R(t,R_0)e^{\int^{t+\omega}_t G(\phi(\tau),\delta)d\tau}
\\
&=R(t,R_0)e^{\int^{\omega}_0 G(\phi(\tau),\delta)d\tau}
>R(t,R_0),
\end{split}
\end{equation*}
which yields a contradiction. We therefore obtain $\bar{S}\le0$ and the proof is complete.
\end{proof}

\begin{remark}
\label{rem-3.1}
We stress that in obtaining the assertion (i) of Proposition \ref{prop-3.2}, the difficult part is the proof of the global stability of zero steady state when $\bar{S}=0$. In fact, if $\bar{S}<0$, we see
from \eqref{eq-3.3} that for any fixed $t_0\ge0$,
\begin{equation}
\label{eq-3.24}
R(t_0+n\omega,R_0)\le R(t_0,R_0)e^{\frac13\int^{t_0+n\omega}_{t_0}S(\phi(\tau))d\tau}
=R(t_0,R_0)e^{\frac13n\omega\bar{S}},
\end{equation}
which implies
$$
\lim_{n\to\infty}R(t_0+n\omega,R_0)=0.
$$
Furthermore, one can easily establish $\lim_{t\to+\infty}R(t,R_0)=0$. However, if $\bar{S}=0$, \eqref{eq-3.24} reduces to
$$
R(t_0+n\omega,R_0)\le R(t_0,R_0),
$$
which fails to yield the desired result. For this reason, we here have employed the method used in
\cite{H-X(21)}. Concretely speaking, we first apply the properties that $\{R(t+n\omega,R_0)\}$ is a nonincreasing sequence in $n$ (see \eqref{eq-3.4}) and in one period, $R(t,R_0)$, $t\in[a,a+\omega]$, can be controlled by $CR(a,R_0)$ (see \eqref{eq-3.5}) to arrive at \eqref{eq-3.0}. Then combing with \eqref{eq-3.7}, we conclude the assertion.
\end{remark}

Finally, we discuss the existence, uniqueness and stability of positive periodic solutions to \eqref{eq-3.1}.

\begin{proposition}
\label{prop-3.3}
Suppose that (A1)--(A3) are fulfilled. Let $\bar{S}>0$. Then the problem
\eqref{eq-3.1} admits a unique positive $\omega$-periodic solution $R_{\rm per}(t)$. Moreover, for any initial value $R_0>0$, the solution $R(t,R_0)$ to \eqref{eq-3.1} satisfies
\begin{equation}
\label{eq-3.14}
\lim_{t\to+\infty}|R(t,R_0)-R_{\rm per}(t)|=0.
\end{equation}
\end{proposition}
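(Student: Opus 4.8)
The plan is to analyze the Poincar\'e (period) map $\mathcal{P}\colon(0,+\infty)\to(0,+\infty)$, $\mathcal{P}(R_0)=R(\omega,R_0)$, attached to the scalar equation \eqref{eq-3.1}; it is well defined by Proposition \ref{prop-3.1}, continuous and strictly increasing by continuous dependence on data and the non-crossing of solutions, and satisfies $\mathcal{P}^n(R_0)=R(n\omega,R_0)$ since \eqref{eq-3.1} is $\omega$-periodic in $t$. Positive $\omega$-periodic solutions of \eqref{eq-3.1} are exactly the positive fixed points of $\mathcal{P}$ (the associated $\sigma_{\mathrm{per}}$ is then recovered as $\sigma(r,\phi(t),R_{\mathrm{per}}(t))$ via Lemma \ref{lem-2.1}). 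First I would trap $\mathcal{P}$ between two barriers. Writing $\phi_{\max}=\max_{[0,\omega]}\phi$ (which exceeds $\sigma_D$, since otherwise $S(\phi)\equiv-\nu$ and $\bar S<0$), the monotonicity of $G$ in $\bar\sigma$ (Lemma \ref{lem-2.2}(i)), the limit $\lim_{R\to+\infty}G(\phi_{\max},R)=-\nu/3<0$ and the monotonicity of $G$ in $R$ give some $\bar R>0$ with $G(\phi(t),R)<0$ for all $t$ and all $R\ge\bar R$; together with the a priori bound \eqref{eq-3.6} this yields $\mathcal{P}(R_0)<R_0$ once $R_0\ge\bar R\,e^{\nu\omega/3}$. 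For the reverse inequality I would use $\bar S>0$: by Lemma \ref{lem-2.2}(ii) and dominated convergence, $\int_0^\omega G(\phi(t),R)\,dt\to\omega\bar S/3>0$ as $R\to0^+$, so there is $\delta>0$ with $\int_0^\omega G(\phi(t),\delta)\,dt>0$; since \eqref{eq-3.5} (with $a=0$) forces $R(\tau,R_0)\le\delta$ on $[0,\omega]$ for all sufficiently small $R_0$, the monotonicity of $G$ in $R$ gives $\mathcal{P}(R_0)=R_0\exp\!\big(\int_0^\omega G(\phi(\tau),R(\tau,R_0))\,d\tau\big)>R_0$ for such $R_0$. The intermediate value theorem applied to $R_0\mapsto\mathcal{P}(R_0)-R_0$ then produces a positive fixed point $R_{\mathrm{per}}(0)$, hence a positive $\omega$-periodic solution $R_{\mathrm{per}}(t)$.

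For uniqueness, suppose $R_1(t)<R_2(t)$ are two positive $\omega$-periodic solutions (they cannot meet, by uniqueness for \eqref{eq-3.1}), and set $q=R_2/R_1>1$, which is $\omega$-periodic and $C^1$. Then $(\ln q)'(t)=G(\phi(t),R_2(t))-G(\phi(t),R_1(t))\le0$ for every $t$, because $G(\bar\sigma,\cdot)$ is nonincreasing (constant $\equiv-\nu/3$ when $\bar\sigma\le\sigma_D$, strictly decreasing when $\bar\sigma>\sigma_D$). Periodicity forces $\int_0^\omega(\ln q)'(t)\,dt=0$, so the nonpositive continuous integrand vanishes identically; but $\bar S>0$ rules out $\phi\le\sigma_D$ on all of $[0,\omega]$, and on the nonempty open set $\{\phi>\sigma_D\}$ the strict monotonicity of $G$ in $R$ then forces $R_1=R_2$, a contradiction. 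Hence $R_{\mathrm{per}}$ is the unique positive $\omega$-periodic solution.

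For the convergence \eqref{eq-3.14}, fix $R_0>0$ and put $a_n=R(n\omega,R_0)=\mathcal{P}^n(R_0)$. If $R_0>R_{\mathrm{per}}(0)$, non-crossing gives $R(t,R_0)>R_{\mathrm{per}}(t)$ for all $t$, and the same ratio identity as above with $q=R(\cdot,R_0)/R_{\mathrm{per}}$ gives $(\ln q)'\le0$, so $\{a_n\}$ is nonincreasing and bounded below by $R_{\mathrm{per}}(0)>0$; by continuity of $\mathcal{P}$ its limit is a positive fixed point of $\mathcal{P}$, hence equals $R_{\mathrm{per}}(0)$ by the uniqueness just shown. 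If $R_0<R_{\mathrm{per}}(0)$ the argument is symmetric: $R(t,R_0)<R_{\mathrm{per}}(t)$ stays bounded, $\{a_n\}$ is nondecreasing and bounded above by $R_{\mathrm{per}}(0)$, and again $a_n\to R_{\mathrm{per}}(0)$. In all cases $R(n\omega,R_0)\to R_{\mathrm{per}}(0)$, and continuous dependence of solutions of \eqref{eq-3.1} on the initial value over the compact interval $[0,\omega]$, together with the $\omega$-periodicity of $R_{\mathrm{per}}$, upgrades this to $\sup_{s\in[0,\omega]}|R(n\omega+s,R_0)-R_{\mathrm{per}}(s)|\to0$, which is \eqref{eq-3.14}.

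The main obstacle I anticipate is the lower barrier in the first step, i.e.\ proving $\mathcal{P}(R_0)>R_0$ for small $R_0$: this is essentially the only place the hypothesis $\bar S>0$ is used, and it requires carefully combining Lemma \ref{lem-2.2}(ii) with the dominated convergence theorem, the crude one-period estimate \eqref{eq-3.5}, and the monotonicity of $G$ in $R$. Once $\mathcal{P}$ is trapped between the two barriers, the monotone structure of the scalar equation makes the existence, uniqueness, and global attraction arguments essentially routine.
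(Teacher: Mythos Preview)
Your proposal is correct and hits the same mathematical substance as the paper (the monotonicity of $G$ in $R$ and the log-ratio identity $(\ln q)'=G(\phi,R_2)-G(\phi,R_1)$), but the organization is genuinely different from the paper's proof, so a short comparison is worthwhile.

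\textbf{Existence.} You trap the Poincar\'e map between two barriers and apply the intermediate value theorem; the paper instead picks any $R_0$, shows $R_n=R(n\omega,R_0)$ is monotone and bounded (via \eqref{eq-3.15}), and proves its limit $R^\#$ is a positive fixed point by invoking Proposition~\ref{prop-3.2}(ii) to rule out $R^\#=0$. Your approach is more self-contained (it does not appeal back to Proposition~\ref{prop-3.2}) but requires the explicit lower barrier, which is where you correctly identify the only real work. One small point: the estimate \eqref{eq-3.5} you invoke is stated in the paper within the proof of Proposition~\ref{prop-3.2} under the hypothesis $\bar S\le0$, but its derivation (via \eqref{eq-3.3} and \eqref{eq-3.10}) does not use that sign condition, so you are entitled to use it here; it would be worth saying so explicitly.

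\textbf{Uniqueness and stability.} Here the two proofs are logically inverted. You prove uniqueness first, directly from the ratio identity and the periodicity constraint $\int_0^\omega(\ln q)'=0$, and then deduce global attraction by noting that the monotone bounded sequence $\mathcal P^n(R_0)$ converges to some fixed point of $\mathcal P$, which must be $R_{\rm per}(0)$ by uniqueness; continuous dependence over one period then upgrades this to \eqref{eq-3.14}. The paper does the opposite: it first proves attraction to \emph{any} given periodic solution by the substitution $R(t,R_0)=R_{\rm per}(t)e^{y(t)}$ and a contradiction argument showing $y(t)\to0$, and only afterwards obtains uniqueness as a corollary of attraction. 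Your ordering is cleaner in that the convergence step becomes almost trivial once uniqueness is in hand; the paper's ordering avoids having to appeal to continuity of $\mathcal P$ and continuous dependence, at the cost of a slightly longer contradiction estimate (\eqref{eq-3.23}).

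Both routes are valid and of comparable length; yours is arguably the more standard dynamical-systems presentation.
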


\begin{proof}
We first establish the existence of positive $\omega$-periodic solutions. Let us denote $\phi^*=\max_{0\le t\le\omega}\phi(t)$. Then $\bar{S}>0$ implies $\phi^*>\tilde\sigma>\sigma_D$ and $g(\phi^*)>0$. Thus, we obtain from Lemma \ref{lem-2.2} that there exists a unique $R^+>0$
such that $G(\phi^*,R^+)=0$; moreover, for any $R_0>0$ and any $t\ge0$,
\begin{equation}
\label{eq-3.15}
R(t,R_0)\le\max\{R_0,R^+\}.
\end{equation}
Given $R_0$, set $R_n=R(n\omega,R_0)$, $n=1,2,\cdots$. By the uniqueness of solutions to the problem \eqref{eq-3.1}, we drive $R_n=R(\omega,R_{n-1})$ and $\{R_n\}_{n\ge0}$ is a monotone sequence. Based on \eqref{eq-3.15}, the use of the convergence of bounded monotone sequences gives that $\{R_n\}$ converges to some nonnegative constant $R^\#$ as $n\to\infty$. We claim that
\begin{equation}
\label{eq-3.16}
R^\#>0.
\end{equation}
As a matter of fact, it suffices to consider the possibility that $R_0>R_1> R_2>\cdots>R_n>R_{n+1}>\cdots$. In this case, an argument similar to that for \eqref{eq-3.0} says that
\begin{equation*}
R(t,R_0)\le R_ne^{\frac13 S^*}\quad{\rm for~all}~n\ge0~{\rm and~all}~t\ge n\omega.
\end{equation*}
Hence, if $\lim_{n\to\infty}R_n=R^\#=0$, then $\lim_{t\to+\infty}R(t,R_0)=0$, which leads to $\bar{S}\le0$ by the second assertion of Proposition \ref{prop-3.2};
it contradicts the fact that $\bar{S}>0$ and the desired result \eqref{eq-3.16} follows. We now show that the solution $R(t,R^\#)$ is an $\omega$-periodic function. Since
$$
R_{n+1}=R_n e^{\int^\omega_0G(\phi(\tau),R(\tau,R_n))d\tau},
$$
we have
$$
\lim_{n\to\infty}\int^\omega_0G(\phi(\tau),R(\tau,R_n))d\tau=0,
$$
or equivalently,
\begin{equation}
\label{eq-3.17}
\int^\omega_0G(\phi(\tau),R(\tau,R^\#))d\tau=0.
\end{equation}
We then get from \eqref{eq-3.17} that
$$
R(\omega,R^\#)=R^\# e^{\int^\omega_0G(\phi(\tau),R(\tau,R^\#))d\tau}=R^\#,
$$
which together with the uniqueness of solutions of \eqref{eq-3.1} produces that $R(t+\omega,R^\#)=R(t,R^\#)$
for any $t\ge0$, as desired.

Next, we assume that $R_{\rm per}(t)$ is a positive $\omega$-periodic solution of \eqref{eq-3.1}, and prove that \eqref{eq-3.14} holds for all $R_0>0$. Without loss of generality, we may suppose that $R(t,R_0)<R_{\rm per}(t)$ for any $t\ge0$. If we write
\begin{equation}
\label{eq-3.18}
R(t,R_0)=R_{\rm per}(t)e^{y(t)},
\end{equation}
then $y(t)<0$ on $[0,+\infty)$ and
\begin{equation}
\label{eq-3.19}
\frac{dy}{dt}=G(\phi(t),R_{\rm per}(t)e^{y(t)})-G(\phi(t),R_{\rm per}(t))\quad{\rm for}~t>0
\end{equation}
by \eqref{eq-3.1}. Using Lemma \ref{lem-2.2}, we find $dy/dt\ge0$. Furthermore, we show that
\begin{equation}
\label{eq-3.20}
\lim_{t\to+\infty}y(t)=y_{\infty}=0.
\end{equation}
If \eqref{eq-3.20} is not true, then $y_{\infty}<0$
and $y(t)\le y_\infty$ for $t\ge0$. Thus, it follows from \eqref{eq-3.19} that
\begin{equation}
\label{eq-3.21}
\frac{dy}{dt}\ge G(\phi(t),R_{\rm per}(t)e^{y_\infty})-G(\phi(t),R_{\rm per}(t))\ge0\quad{\rm for}~t>0.
\end{equation}
On the other hand, notice that $\bar{S}>0$ implies that there exists some closed interval $[a,b]\subset[0,\omega]$
such that $\phi(t)>\sigma_D$ on $[a,b]$. Consequently, we employ Lemma \ref{lem-2.2} again to get
\begin{equation}
\label{eq-3.22}
G(\phi(t),R_{\rm per}(t)e^{y_\infty})-G(\phi(t),R_{\rm per}(t))>0\quad{\rm for~each}~t\in[a,b].
\end{equation}
A combination of \eqref{eq-3.21} and \eqref{eq-3.22} yields
\begin{equation}
\label{eq-3.23}
\begin{split}
y(b+k\omega)-y(a)=&\int^{b+k\omega}_a\frac{dy}{dt}dt
\ge\int^{b+k\omega}_a\left[G(\phi(t),R_{\rm per}(t)e^{y_\infty})-G(\phi(t),R_{\rm per}(t))\right]dt
\\
\ge&\int^b_a\left[G(\phi(t),R_{\rm per}(t)e^{y_\infty})-G(\phi(t),R_{\rm per}(t))\right]dt
\\
&+\int^{b+\omega}_{a+\omega}\left[G(\phi(t),R_{\rm per}(t)e^{y_\infty})-G(\phi(t),R_{\rm per}(t))\right]dt
\\
&+\cdots+\int^{b+k\omega}_{a+k\omega}\left[G(\phi(t),R_{\rm per}(t)e^{y_\infty})-G(\phi(t),R_{\rm per}(t))\right]dt
\\
=&(k+1)\int^b_a\left[G(\phi(t),R_{\rm per}(t)e^{y_\infty})-G(\phi(t),R_{\rm per}(t))\right]dt
\\
\to&+\infty(k\to\infty),
\end{split}
\end{equation}
which contradicts the fact that $\lim_{t\to+\infty}y(t)$ exists.
We therefore conclude \eqref{eq-3.20}, which, combined with \eqref{eq-3.18}, immediately leads to \eqref{eq-3.14}.

Finally, we obtain the uniqueness of positive $\omega$-periodic solutions to \eqref{eq-3.1}. Assume that $R^1_{\rm per}(t)$ and $R^2_{\rm per}(t)$ are two positive $\omega$-periodic solutions. Then we see from \eqref{eq-3.14} that for $t\ge0$,
$$
|R^1_{\rm per}(t)-R^2_{\rm per}(t)|=|R^1_{\rm per}(t+n\omega)-R^2_{\rm per}(t+n\omega)|\to0(n\to\infty),
$$
which implies $R^1_{\rm per}(t)=R^2_{\rm per}(t)$. This completes the proof.
\end{proof}

\begin{proof}[\indent\it\bfseries Proof of Theorem \ref{thm-1.1}]
The proof of Theorem \ref{thm-1.1} is now just a combination of that of Propositions \ref{prop-3.1}, \ref{prop-3.2} and
\ref{prop-3.3}.
\end{proof}

\section{Conclusion and biological interpretation}

In this paper, we have studied the well-posedness and asymptotic behavior of solutions to a free boundary problem modeling the growth of radially symmetric necrotic tumors with angiogenesis and a periodic supply of external nutrients. By denoting $\bar{S}=\frac1{\omega}\int^{\omega}_0S(\phi(t))dt$, representing the average of the proliferation rate when taking up the nutrient in the host tissue, we find that the external nutrient supply plays a critical role in tumor growth. 
Our results indicate that if $\bar{S}\le0$, being similar to the case of constant nutrient supply \cite{SHW(21)}, all evolutionary tumors would finally disappear. If $\bar{S}>0$, as a result of a periodic supply of external nutrients, the tumor would be induced a periodic growth state, and tumor growth would finally synchronize the periodic external nutrient supply. In other words, the growth pattern of tumors may be controlled by adjusting the external nutrient supply. These results may provide a reference for a better and deeper understanding of the tumor growth and the medical treatment.

\section*{Acknowledgments}
This work was partly supported by the National Natural Science Foundation of China (No. 11601200, No. 11861038, No. 12161045 and No. 11771156).

\end{document}